\newtheorem{thm}{Theorem}
\newtheorem{cor}[thm]{Corollary}
\newtheorem{lem}[thm]{Lemma}
\newtheorem{prop}[thm]{Proposition}
\theoremstyle{definition}
\newtheorem{rem}[thm]{Remark}
\numberwithin{equation}{section}
\newcommand{\Tr}{{\mathrm{Tr}}}
\newcommand{\diag}{\mathrm{diag}}
\newcommand{\F}{\mathbb{F}}
\begin{document}

\title{Companion matrices as sums of $p$-potent and nilpotent matrices}

\author[A. Pojar]{Andrada Pojar}
\address{Technical University of Cluj-Napoca, Department of Mathematics, Str. Memorandumului 28, 400114, Cluj-Napoca, Romania}
\email{andrada.pojar@math.utcluj.ro}

\begin{abstract}
We prove that, over a field $\F$ of odd characteristic $p,$ a companion matrix $C$  is the sum of $E$ and $N,$ with $E$ $p$-potent (i.e. $E^p=E,$) and $N$ nilpotent, if and only if the trace of $C$ is an integer multiple of unity of $\F.$
\end{abstract}

\keywords{ companion matrix; nilpotent matrix; $p$-potent matrix }

\subjclass[2010]{15A24, 15A83, 16U99}

\maketitle

\section{Introduction}

Let $\F$ be a field with positive odd characteristic $p.$
As usual, the letter $\F_p=\mathbb{Z}_p$ will stand for the prime field of $p$ elements having characteristic $p$, for any positive integer $n$, the notation $\mathbb{M}_n(\F)$ will denote the full matrix ring of $n\times n$ matrices over $\F.$

A square matrix $E$ over $\F$ is $p$-potent if $E^p=E.$ An idempotent is a $2$-potent. A square matrix $A$ over $\F$ is nilpotent if there exists $k$ a natural number such that $N^k=0.$ A square matrix $A$ over $\F$ is nil-clean if there exist an idempotent $E$ and a nilpotent $N$ such that $A=E+N.$ We consider decompositions of $A$ such that there exist a $p$-potent $E$ and a nilpotent $N,$ with $A=E+N.$

Nil-clean decompositions are related to clean ones, introduced by Nicholson in \cite{N}, when investigating exchange rings and were first studied by Diesl in \cite{Diesl}. An important result appeared in \cite{BCDM} and \cite{KLZ} about them is: every $n\times n$ matrix over a division ring $D$ is nil-clean if and only if $D=\F_2.$

An extension of nil-clean decompositions to finite fields of odd cardinality $q,$ was done in \cite{AM}: every matrix over such a field is the sum of a $q$-potent $E=E^q$ and a nilpotent. In \cite{B} there is even more -- the nilpotent $N$ involved in such a decomposition can be with nilpotence index at most $3$ (i.e. $N^3=0).$ Decompositions as a sum of a potent and square-zero matrix were considered in \cite{DGGL}.

In \cite{BM} it has been proved that if $\F$ is a field of positive characteristic, $p,$ then a companion matrix $A\in \mathbb{M}_n(\F)$ is nil-clean if and only if $A$ is nilpotent or unipotent or the trace of $A$ is of the form $t\cdot 1,$ with $t\in \{1,2,\dots,p\}$ such that $n>t$. In \cite{C} there is a characterization of $n\times n$ companion matrices over fields of positive odd characteristic $p$ that are sums of $m$ idempotents, $m\geq 2$,  and a nilpotent, in terms of dimension, and trace of such a matrix, and of $p.$ We prove that if the characteristic of the field is odd then a companion matrix $C\in \mathbb{M}_n(\F)$ is sum of a $p$-potent and a nilpotent if and only if the trace of $A$ is of the form $t\cdot 1,$ with $t$ an integer number, if and only if $C$ is sum of $m$ $p$-potents and a nilpotent.

\section{Modified companion matrices as a tool}
We provide our desired decomposition for companion matrices because each squared matrix over a field is similar to a direct sum of companion matrices. Now since a direct sum of matrices which can be decomposed as a sum of a $p$-potent and a nilpotent is a matrix with the same property and because of the fact that a matrix similar to a a $p$-potent is a $p$-potent and a matrix similar to a nilpotent is a nilpotent, our wanted decomposition for our initial matrix can be obtained.

Letting $q$ be a monic polynomial (i.e., its leading coefficient is $1$) over $\F,$ with $q=X^n+c_{n-1}X^{n-1}+\ldots+c_1X+c_0$, we explicitly indicate the companion matrix associated to $q$ as the $n\times n$ matrix
$$C=C_{c_0,c_1,\ldots, c_{n-1}}=\left(\begin{array}{ccccc}
0 & 0  &\ldots & 0 & -c_0 \\
1 & 0  &\ldots & 0 & -c_1\\
\vdots & \vdots  &\cdots  & \vdots & \vdots \\
0 & 0 &\ldots & 1 & -c_{n-1}
 \end{array}\right)$$

\noindent To avoid some inaccuracies with the exact meaning, we also denote $C$ by $C_q$ using the subscript $q$ which may vary in each of the different cases.

\medskip

Let $k$ a nonnegative integer, $k\leq n,$ and $\F$ a field. Let $a$ be a nonzero integer multiple of the unity of $\F$. A type of modified companion matrix $\widetilde{C}_{q,k,a}$ associated to q, namely
$\widetilde{C}_{q,k,a}=C_q+\diag (\underbrace{a,\dots,a }_{k \textrm{-times}} ,0,\dots,0),$ over $\F,$  has been defined in \cite{C} for $a=1$.

Let $k$ be nonnegative integer, $k\leq n,$ and $\F$ a field. Let $a_1,a_2,\dots,a_k$ be integer multiples of the unity of $\F$. We will work with a generalization of modified companion matrix  associated to q, namely
$C_q+\diag (a_1,a_2,\dots,a_k,0,\dots,0),$ over $\F.$

The next Lemma is a generalization of a Lemma in \cite{Pojar}.

\begin{lem}\label{use1}
 Let $k$ and $n$ be nonzero natural numbers, $n\geq k,$ and $\F$ be a field. Let $a_1,a_2,\dots,a_k$ be integer multiples of the unity. Every companion matrix $C_q\in M_n(\mathbb{F})$ is similar to the modified companion matrix  $C_{q'}+\diag (a_1,a_2,\dots,a_k,0,\dots,0)$ for some polynomial $q'.$
\end{lem}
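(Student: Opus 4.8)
The plan is to reduce everything to two facts about the modified companion matrix $M_{q'}:=C_{q'}+D$, where $D:=\diag(a_1,\dots,a_k,0,\dots,0)$: that $M_{q'}$ is non-derogatory for \emph{every} monic $q'$ of degree $n$, and that as $q'$ ranges over all monic polynomials of degree $n$ over $\F$, the characteristic polynomial $\chi_{M_{q'}}$ ranges over all such polynomials. Since $C_q$ is itself non-derogatory with characteristic polynomial $q$, and since every non-derogatory matrix is similar to the companion matrix of its characteristic polynomial, these two facts finish the proof: choose $q'$ with $\chi_{M_{q'}}=q$; then $C_q$ and $M_{q'}$ are both non-derogatory with characteristic polynomial $q$, hence both similar to $C_q$, hence similar to each other, and $M_{q'}=C_{q'}+\diag(a_1,\dots,a_k,0,\dots,0)$ is exactly the modified companion matrix required.

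For the first fact I would exhibit $e_1$ as a cyclic vector of $M_{q'}$. The $j$-th column of $C_{q'}$ is $e_{j+1}$ for $j<n$, and $D$ is diagonal, so $M_{q'}e_j=e_{j+1}+D_{jj}e_j\in e_{j+1}+\F e_j$ for every $j<n$. An immediate induction on $j$ then shows $\mathrm{span}(e_1,M_{q'}e_1,\dots,M_{q'}^{\,j-1}e_1)=\mathrm{span}(e_1,\dots,e_j)$ for $1\le j\le n$: if $M_{q'}^{\,j-1}e_1\equiv e_j$ modulo $\mathrm{span}(e_1,\dots,e_{j-1})$, then applying $M_{q'}$ gives $M_{q'}^{\,j}e_1\equiv e_{j+1}$ modulo $\mathrm{span}(e_1,\dots,e_j)$. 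Hence $e_1,M_{q'}e_1,\dots,M_{q'}^{\,n-1}e_1$ is a basis of $\F^n$, so $M_{q'}$ is non-derogatory.

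For the second fact, the key observation is that $M_{q'}$ differs from the fixed lower-triangular matrix $B:=S+D$ (with $S$ the subdiagonal shift, $Se_j=e_{j+1}$ for $j<n$, $Se_n=0$) only in its last column, and linearly so: writing $q'=X^n+c_{n-1}'X^{n-1}+\dots+c_0'$ we have $XI_n-M_{q'}=(XI_n-B)+\sum_{i=0}^{n-1}c_i'\,e_{i+1}e_n^{\mathsf T}$. Multilinearity of the determinant in the columns gives
$$\chi_{M_{q'}}(X)=\det(XI_n-B)+\sum_{i=0}^{n-1}c_i'\,g_i(X),$$
where $g_i(X)$ is the determinant of the matrix obtained from $XI_n-B$ by replacing its last column with $e_{i+1}$. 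Since $XI_n-B$ is lower triangular with diagonal $X-a_1,\dots,X-a_k,X,\dots,X$ and subdiagonal entries $-1$, one finds $\det(XI_n-B)=X^{\,n-k}\prod_{j=1}^k(X-a_j)$, which is monic of degree $n$; and expanding $g_i$ along the replaced last column and evaluating the resulting $(i{+}1,n)$-minor — which splits as a block diagonal matrix consisting of an $i\times i$ lower-bidiagonal block contributing $\prod_{j=1}^i(X-D_{jj})$ and an upper-triangular block with all diagonal entries $-1$ contributing a sign that cancels the cofactor sign — one gets $g_i(X)=\prod_{j=1}^{i}(X-D_{jj})$, monic of degree exactly $i$. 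Thus $g_0,\dots,g_{n-1}$ form a Newton-type basis of the space of polynomials of degree $<n$, so $(c_0',\dots,c_{n-1}')\mapsto\chi_{M_{q'}}$ is an affine bijection onto the monic degree-$n$ polynomials, and in particular some $q'$ yields $\chi_{M_{q'}}=q$.

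The only genuinely computational point is the evaluation of the minors defining the $g_i(X)$, and that is where I expect the main care to be needed: one must track precisely which row and which column are deleted, verify the claimed block decomposition of the surviving matrix, and confirm the sign bookkeeping. Once the $g_i$ are identified as monic of degree $i$, the triangular structure of the linear system relating the $c_i'$ to the coefficients of $\chi_{M_{q'}}$ is automatic, so the surjectivity (indeed bijectivity) in the second fact is immediate, and the lemma follows.
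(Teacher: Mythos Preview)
Your proof is correct and takes a genuinely different route from the paper's. The paper argues constructively: starting from $C_q$ it builds a new basis $f_1=e_1$, $f_i=C_q(f_{i-1})-a_{i-1}f_{i-1}$ for $2\le i\le k+1$, $f_i=C_q(f_{i-1})$ for $k+2\le i\le n$, and checks that in this basis the matrix of $C_q$ is exactly $\diag(a_1,\dots,a_k,0,\dots,0)+C_{q'}$; the case $k=n$ is handled by a separate reduction. You go the other direction: you show that every $M_{q'}$ is non-derogatory (via the cyclic vector $e_1$) and that $q'\mapsto\chi_{M_{q'}}$ is an affine bijection onto the monic degree-$n$ polynomials (by the Newton-basis computation $\chi_{M_{q'}}=\det(XI_n-B)+\sum_i c_i'g_i$ with $g_i$ monic of degree $i$), and then invoke rational canonical form. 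Your approach handles all $k\le n$ uniformly and yields as a bonus that the $q'$ is unique; the paper's approach is more elementary in that it exhibits the conjugating matrix explicitly rather than appealing to the fact that two non-derogatory matrices with the same characteristic polynomial are similar. Incidentally, neither argument uses the hypothesis that the $a_i$ are integer multiples of the unity, so the lemma holds for arbitrary $a_i\in\F$. Your minor computation checks out: deleting row $i{+}1$ and column $n$ from $XI_n-B$ does give a block-diagonal matrix with the stated blocks, and the sign $(-1)^{(i+1)+n}\cdot(-1)^{n-1-i}=1$ cancels as you claim.
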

\begin{proof}
First we will prove the statement for $k\in \{1,2,\dots,n-1\}.$
 Let $V$ denote the $n-$dimensional vector space of columns over $\mathbb{F}$ and consider $C_q$ as an endomorphism $C_q:V\rightarrow V.$ Denoting by
$\{e_1,e_2,\ldots,e_n\}$ the standard basis of $V,$ $C_q$ maps each $e_i$ to $e_{i+1}$, for each $i\in\{1,2,\ldots,n-1\}.$

Now we define $\{f_1,f_2,\ldots,f_n\},$ $f_i\in V,$ $i\in\{1,2,\ldots,n-1\},$ inductively as it follows. First set $f_1=e_1$. Assuming that $2\leq i\leq n$ and that $f_{i-1}$ has been defined, set $f_i=C_q(f_{i-1})-a_{i-1}f_{i-1},$ if $i\in \{2,\ldots,k+1\}$  and $f_i=C_q(f_{i-1}),$ if $i\in \{k+2,\ldots,n\}$

We have $e_1=f_1,$ so $e_1\in \textrm{Lin}(\{f_1\})$ and $f_2=C_q(f_1)-a_1f_1=C_q(e_1)-a_1f_1=e_2-a_1f_1,$ so $e_2=a_1f_1+f_2$ and $e_2\in \textrm{Lin}(\{f_1,f_2\})$

It is easy to see that each $f_i$ is the sum of $e_i$ and a linear combination of $e_{i-1},e_{i-2},\ldots,e_2,e_1$. Hence $e_i$ is the difference of $f_i$ and a linear combination of $e_{i-1},e_{i-2},\ldots,$ $e_2,e_1$. Assuming $e_1,e_2,\ldots,e_{i-1}\in \textrm{Lin}(\{f_1,f_2,\ldots,f_n\}),$ we get $e_i$ is a linear
combination of $f_1,f_2,\ldots,f_n$. Therefore $\textrm{Lin}(\{e_1,e_2,\ldots,e_n\})=\textrm{Lin}(\{f_1,f_2,\ldots,f_n\})$ and thus $\{f_1,f_2,\ldots,f_n\}$ is a basis of $V.$

Moreover by the definition we have:
$$C_q(f_1)=a_1f_1+f_2,$$
$$C_q(f_2)=a_2f_2+f_3,$$
$$ \vdots $$
$$C_q(f_k)=a_kf_k+f_{k+1},$$
$$C_q(f_{k+1})=f_{k+2},$$
$$ \vdots $$
$$C_q(f_{n-1})=f_n.$$
Let $M$ be the matrix the endomorphism $C_q$ corresponds to, with respect to the basis $B=\{f_1,f_2,\ldots,f_n\}$. Therefore
 $$M=[[C_q(f_1)]_B,\ldots,[C_q(f_{k})]_B,[C_q(f_{k+1})]_B,\dots,[C_q(f_n)]_B].$$
 Hence
 $$M=[[a_1f_1+f_2]_B,\ldots,[a_kf_k+f_{k+1}]_B,[f_{k+2}]_B,\ldots,[f_n]_B,[C_q(f_n)]_B].$$
 It follows that $M=\diag (a_1,\dots,a_k ,0,\dots,0)+C_{q'}$ for some monic polynomial $q'$ of degree $n.$
\\ So $C_q=P(\diag (a_1,\dots,a_k ,0,\dots,0)+C_{q'})P^{-1},$ where $P$ is the transition matrix mapping each $e_i$ to $f_i.$

 As next step we will solve  the case $k=n,$ that is we will prove that $C$ is similar to $\diag(a_1,a_2,\dots,a_n)+C',$ where $C'$ is a companion matrix. First, we know by the particular case of this Lemma (which is in \cite{Pojar}) that there exists a companion matrix such that $C\sim a_nI_n+C_1.$ Then by the case $k<n,$ we know that for $C_1$ there exists a companion matrix $C'$ such that $C_1\sim \diag(a_1-a_n,a_2-a_n,\dots,a_{n-1}-a_n,0)+C'.$ Hence $C$ is similar to $\diag(a_1,a_2,\dots,a_n)+C'.$

\end{proof}

\section{Companion matrices as sums of $p$-potent and nilpotent matrices}
The following Proposition has been proved for $\F_p$ in \cite{Pojar}. The same proof is valid for any field with with positive odd characteristic $p.$

\begin{prop}\label{PrescrDecomp}
Let $\F$ be a field with positive odd characteristic $p.$
Let $n\geq 2$ and $k$ be positive integers such that $k<n.$ Let $a\in \{1,2,\dots,p-2\}.$ Fix constants $d_0, d_1,\dots,d_{n-1}\in \F$ and denote $D=C_{d_0,d_1,\ldots,d_{n-1}}+\diag (\underbrace{a,\dots,a }_{k \textrm{-times}}  ,0,\dots,0).$ For every polynomial $g\in \F[X]$ of degree at most $n-2$ there exist two matrices $E,$ $M$ in $\mathbb{M}_n(\F)$ such that:
\begin{enumerate}
\item $D=E+M$
\item $E^p=E$, and
\item $\chi_M=X^n+(k\cdot 1+d_{n-1})X^{n-1}+g.$
\end{enumerate}
\end{prop}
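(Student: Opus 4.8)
The plan is to exhibit $E$ explicitly as a scalar multiple of an idempotent and then force the characteristic polynomial of $M=D-E$ to be the prescribed one. First I would record the trace constraint. Since the diagonal of $D$ is $(a,\dots,a,0,\dots,0,-d_{n-1})$ we have $\Tr D=(k\cdot 1)a-d_{n-1}$, while the targeted $\chi_M$ has $X^{n-1}$-coefficient $k\cdot 1+d_{n-1}$, i.e. $\Tr M=-(k\cdot 1+d_{n-1})$. Additivity of the trace then forces $\Tr E=\Tr D-\Tr M=(k\cdot 1)(a+1)$. This suggests seeking $E$ whose eigenvalues are $0$ and $a+1$: because $a\in\{1,\dots,p-2\}$ we have $a+1\in\{2,\dots,p-1\}$, so $a+1$ is a nonzero element of the prime field $\F_p$. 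Any $E$ with $E^2=(a+1)E$ then has minimal polynomial dividing $X(X-(a+1))$, which has distinct roots $0,a+1$ in $\F_p$; hence $E$ is diagonalizable with eigenvalues in $\F_p$ and therefore satisfies $E^p=E$. Thus it suffices to produce $E=(a+1)P$ with $P$ idempotent of rank $k$, the rank being fixed by the trace.

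For the idempotent I would take $P=\begin{pmatrix} I_k & B\\ 0 & 0\end{pmatrix}$ in the splitting $\F^n=\F^k\oplus\F^{n-k}$, where $B$ is an \emph{arbitrary} $k\times(n-k)$ matrix; a direct check gives $P^2=P$ and $\operatorname{rank}P=k$ for every $B$, so $E=(a+1)P$ is $p$-potent with no constraint on $B$. Setting $M=D-E$ yields conditions (1) and (2) for free, so everything reduces to (3). Here I would observe that $E$ is supported on rows $1,\dots,k$ only, so $M$ and $D$ agree outside these rows; in particular every subdiagonal entry of $M$ equals $1$. Consequently $e_1$ is a cyclic vector for $M$ (the iterates $e_1,Me_1,\dots,M^{n-1}e_1$ are unitriangular in the standard basis), so $M$ is nonderogatory and similar to the companion matrix of $\chi_M$. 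Condition (3) therefore amounts to the single requirement $\chi_M=X^n+(k\cdot 1+d_{n-1})X^{n-1}+g$. Its $X^{n-1}$-coefficient is $-\Tr M$, which by the computation above equals $k\cdot 1+d_{n-1}$ independently of $B$, so the top two coefficients are automatically correct.

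It remains to choose $B$ so that the lower coefficients of $\chi_M$ match those of the prescribed $g$; this is the heart of the argument and the step I expect to be most delicate. The free entries $(a+1)B_{ij}$ occupy the rectangle of rows $1,\dots,k$ and columns $k+1,\dots,n$ of $M$, and I would expand $\det(XI-M)$ along the first column, using the unit subdiagonal to run the standard Hessenberg-type recursion for the coefficients. The goal is to show that the $n-1$ coefficients of degree $\le n-2$ depend on a suitable set of $n-1$ entries of $B$ through a triangular affine system whose pivots are nonzero multiples of $a+1$; solving it successively then realizes any $g$. For $k=1$ this is transparent: $E$ has a single nonzero row, $M$ differs from $D$ only in its first row, and a cofactor expansion shows that the entry $(a+1)B_{1,j}$ first enters the coefficient of $X^{n-1-j}$ with multiplier $a+1$, giving an explicitly lower-triangular system. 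For general $k$ I would isolate the $n-1$ entries formed by the last row of $B$ together with the first $k-1$ entries of its last column, and verify the same triangular behaviour; the inequality $k(n-k)\ge n-1$ (valid for $1\le k\le n-1$) guarantees there is room for such a choice. Establishing this triangularity cleanly and uniformly in $k$ is the main obstacle, but the nonvanishing of $a+1$ ensures that the pivots never degenerate.
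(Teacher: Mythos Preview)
The paper does not actually prove this proposition: it merely states that the result was established over $\F_p$ in \cite{Pojar} and that the same proof carries over to any field of odd characteristic $p$. There is therefore no in-paper argument against which to compare your approach.

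Your strategy is sound and can be completed. Taking $E=(a+1)P$ with $P=\begin{pmatrix}I_k & B\\ 0 & 0\end{pmatrix}$ does give a $p$-potent with the forced trace, and $M=D-E$ is upper Hessenberg with unit subdiagonal, so $\chi_M$ is governed by the recursion $p_j=(X-M_{jj})p_{j-1}-\sum_{i<j}M_{ij}\,p_{i-1}$. The step you flag as the main obstacle---triangularity for general $k$---really does go through with your choice of free entries (last row of $B$ and first $k-1$ entries of its last column, the remaining entries of $B$ set to $0$). One finds $p_j=(X+1)^j$ for $j\le k$, then $p_j=Xp_{j-1}-M_{kj}(X+1)^{k-1}$ for $k<j<n$, and unwinding the final step $p_n$ shows that the polynomial in $X$ multiplying the free entry $M_{k,l}$ has degree exactly $n+k-1-l$ (for $k+1\le l\le n$) while the one multiplying $M_{i,n}$ has degree exactly $i-1$ (for $1\le i\le k-1$); these degrees exhaust $0,1,\dots,n-2$ without repetition and every leading coefficient is $-1$. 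Hence the affine map from your $n-1$ free parameters to the coefficients of $X^{n-2},\dots,X^0$ in $\chi_M$ has invertible (indeed triangular) linear part, and any prescribed $g$ is attained. The nonvanishing of $a+1$ enters only to translate back from the $M_{ij}$ to the entries of $B$, exactly as you note.
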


\begin{cor}\label{corPrescrDecomp}
Let $\F$ be a field with positive odd characteristic $p.$ Let $n\geq 2$ be a positive integer. Fix constants $c_0, c_1,\dots,c_{n-1}\in \F.$
Let $C_{c_0,c_1,\ldots,c_{n-1}}$ be a companion matrix and if there exist $k\in \{1,2,\ldots,n-1\}$ and $a\in \{1,2,\ldots,p-2\}$ such that $-c_{n-1}=k(a+1)$ then  $C$ is sum of a $p$-potent and a nilpotent.
\end{cor}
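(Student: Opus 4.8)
The plan is to reduce the claim directly to Proposition \ref{PrescrDecomp} via Lemma \ref{use1}. Suppose $-c_{n-1}=k(a+1)$ with $k\in\{1,\dots,n-1\}$ and $a\in\{1,\dots,p-2\}$. By Lemma \ref{use1} applied with the scalars $a_1=\dots=a_k=a$ (these are integer multiples of unity since $a$ is a positive integer), the companion matrix $C=C_{c_0,\dots,c_{n-1}}$ is similar to a modified companion matrix $D=C_{q'}+\diag(\underbrace{a,\dots,a}_{k},0,\dots,0)$ for some monic polynomial $q'$ of degree $n$; write $q'=X^n+d_{n-1}X^{n-1}+\dots+d_0$. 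First I would observe that similarity preserves the property of being a sum of a $p$-potent and a nilpotent (a matrix similar to a $p$-potent is $p$-potent, a matrix similar to a nilpotent is nilpotent, as noted at the start of Section~2), so it suffices to show $D$ has this decomposition.

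Next I would pin down the trace. Since $C$ and $D$ are similar, $\Tr D=\Tr C=-c_{n-1}=k(a+1)=ka+k$. On the other hand, directly from the shape of $D$ we have $\Tr D = ka - d_{n-1}$ (the $k$ diagonal entries $a$ from the added diagonal block, plus the companion-matrix contribution $-d_{n-1}$). Comparing, $-d_{n-1}=k$, i.e. $k\cdot 1 + d_{n-1}=0$ in $\F$. This is exactly the coefficient that appears in condition (3) of Proposition \ref{PrescrDecomp}.

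Now I would invoke Proposition \ref{PrescrDecomp} with this $D$, with $n$, $k<n$, $a\in\{1,\dots,p-2\}$, and with the choice $g=0$ (a polynomial of degree at most $n-2$). The Proposition yields $E,M\in\mathbb{M}_n(\F)$ with $D=E+M$, $E^p=E$, and $\chi_M = X^n + (k\cdot 1 + d_{n-1})X^{n-1} + g = X^n + 0\cdot X^{n-1} + 0 = X^n$. Hence $\chi_M=X^n$, so by the Cayley--Hamilton theorem $M^n=0$ and $M$ is nilpotent. Therefore $D=E+M$ is a sum of a $p$-potent and a nilpotent, and pulling back through the similarity, so is $C$.

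The only genuinely delicate point is the trace bookkeeping that forces $k\cdot 1 + d_{n-1}=0$; everything else is a direct application of the two earlier results, so I do not expect any real obstacle. One should just take minor care that the hypotheses of Lemma \ref{use1} and Proposition \ref{PrescrDecomp} line up — in particular that $a$ lies in the admissible range $\{1,\dots,p-2\}$ and that $k$ is a \emph{positive} integer strictly less than $n$, both of which are given.
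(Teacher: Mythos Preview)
Your proposal is correct and follows essentially the same route as the paper's own proof: apply Lemma~\ref{use1} with $a_1=\dots=a_k=a$ to pass to a similar modified companion matrix $D$, use the trace identity to force $k\cdot 1+d_{n-1}=0$, then apply Proposition~\ref{PrescrDecomp} with $g=0$ and pull the decomposition back through the similarity. Your write-up is in fact a bit more explicit than the paper's (you spell out the Cayley--Hamilton step showing $\chi_M=X^n\Rightarrow M^n=0$), but the argument is the same.
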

\begin{proof}
Let $C_{c_0,c_1,\ldots,c_{n-1}}$ be a companion matrix. By Lemma \ref{use1} we have that there exist  $d_0, d_1,\dots,d_{n-1}\in \F$, $k\in \{1,2,\ldots,n-1\},$ and $a\in \{1,2,\ldots,p-2\}$ such that $C_{c_0,c_1,\ldots,c_{n-1}}$ is similar to $D=C_{d_0,d_1,\ldots,d_{n-1}}+\diag (\underbrace{a,\dots,a }_{k \textrm{-times}}  ,0,\dots,0)$. The traces of two similar matrices are equal and so $-c_{n-1}=-d_{n-1}+k\cdot a$. But $-c_{n-1}=k(a+1)$, and so $-d_{n-1}=k\cdot 1.$ Now taking $g=0$ in Proposition \ref{PrescrDecomp} we have that there exist a $p$-potent matrix $E$ and a nilpotent matrix $N$, such that $D=E+N$. Therefore since $C_{c_0,c_1,\ldots,c_{n-1}}$ is similar to $D$ and because a matrix similar to a $p$-potent is a $p$ potent and a matrix similar to a nilpotent is a nilpotent, we have a decomposition as a sum of a $p$-potent and a nilpotent for $C_{c_0,c_1,\ldots,c_{n-1}}.$
\end{proof}

\begin{lem}\label{traceless}
Let $\F$ be a field with positive odd characteristic $p.$ Let $n\geq 1$ be a positive integer. Fix constants $c_0, c_1,\dots,c_{n-2},0\in \F.$
Let $C=C_{c_0,c_1,\ldots,c_{n-2},0}$ be a companion matrix. Then $C$ is the sum of a $p$-potent matrix and a nilpotent matrix.
\end{lem}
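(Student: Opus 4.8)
The hypothesis is that $c_{n-1}=0$, so the trace of $C$ is $-c_{n-1}=0$, which is certainly an integer multiple of unity. The strategy is to reduce to Corollary~\ref{corPrescrDecomp}, or rather to Proposition~\ref{PrescrDecomp} directly, by first conjugating $C$ into a convenient modified companion form and then extracting the $p$-potent-plus-nilpotent decomposition with a suitable choice of the free polynomial $g$. The natural parameters are $k=1$ and $a=1$ (these lie in the allowed ranges $\{1,\dots,n-1\}$ and $\{1,\dots,p-2\}$ provided $p\geq 5$; the case $p=3$ and the small-$n$ cases will need to be handled separately, see below).

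First I would apply Lemma~\ref{use1} with $k=1$ and $a_1=1$: there is a polynomial $q'$, say $q'=X^n+d_{n-1}X^{n-1}+\dots+d_0$, such that $C$ is similar to $D:=C_{q'}+\diag(1,0,\dots,0)$. Comparing traces, $0=-c_{n-1}=-d_{n-1}+1$, so $d_{n-1}=1$, i.e. the coefficient $k\cdot 1+d_{n-1}$ appearing in Proposition~\ref{PrescrDecomp}(3) equals $1+1=2$. Wait---that is nonzero, so the characteristic polynomial of the nilpotent part would be $X^n+2X^{n-1}+g$, which cannot be the characteristic polynomial of a nilpotent matrix unless we are cleverer. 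The fix is to instead choose the shift so that the resulting $D$ has the right last coefficient: apply Lemma~\ref{use1} with $k=1$ and $a_1=-1$ (equivalently $(p-1)\cdot 1$), so that $d_{n-1}=-c_{n-1}-(-1)=1$, and then... again $k\cdot 1 + d_{n-1}=2\neq 0$. The cleaner route: in Proposition~\ref{PrescrDecomp} the diagonal shift uses a \emph{single} value $a$ repeated $k$ times, and its conclusion (3) forces the $X^{n-1}$-coefficient of $\chi_M$ to be $k\cdot 1+d_{n-1}$. For $\chi_M$ to be $X^n$ (so that $M$ is nilpotent) we need $d_{n-1}=-k\cdot 1$ and $g=0$. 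So I would apply Lemma~\ref{use1} with $k=1$, $a_1=a+1$ where $a\in\{1,\dots,p-2\}$ is to be chosen; then $d_{n-1}=-c_{n-1}+(a+1)=a+1$, and we need $d_{n-1}=-1$, i.e. $a+1=-1$, i.e. $a=p-2$. Since $p$ is odd, $p-2\in\{1,\dots,p-2\}$, so this is legitimate as long as $p-2\geq 1$, i.e. always, and the range condition $k=1<n$ needs $n\geq 2$.

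Concretely: take $k=1$, shift value $a_1=p-2+1=p-1\equiv-1$, so by Lemma~\ref{use1} $C\sim D=C_{q'}+\diag(-1,0,\dots,0)$ with $d_{n-1}=-c_{n-1}+(p-1)=-1\cdot 1$. Now rewrite $D=C_{q'}+\diag((p-2)+1,0,\dots,0)$ and apply Proposition~\ref{PrescrDecomp} with this $a=p-2\in\{1,\dots,p-2\}$, $k=1$, and $g=0$: we obtain $D=E+M$ with $E^p=E$ and $\chi_M=X^n+(1\cdot 1+d_{n-1})X^{n-1}+0=X^n$, so $M$ is nilpotent. Since $p$-potence and nilpotence are preserved under conjugation, $C=P D P^{-1}=PEP^{-1}+PMP^{-1}$ is the desired decomposition.

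The main obstacle is the boundary cases where the parameters fall outside the allowed ranges: namely $n=1$ (then $C=(0)$ is already nilpotent, trivial), and the characteristic $p=3$ where $\{1,\dots,p-2\}=\{1\}$, forcing $a=1$ and hence requiring $-c_{n-1}=d_{n-1}+\cdots$; here $a=p-2=1$ still works since $p-2=1$, so actually $p=3$ causes no trouble, we just need $a=1$, shift $a_1=2$, and $d_{n-1}=-c_{n-1}+2=2=-1$ in $\F_3$. I would double-check that $n=1$ is dispatched by hand and that Proposition~\ref{PrescrDecomp}'s hypothesis $n\geq 2$, $k<n$ is met for all $n\geq 2$ with $k=1$. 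No genuinely hard step remains; the content is entirely in Lemma~\ref{use1} and Proposition~\ref{PrescrDecomp}, and this lemma is the bookkeeping that plugs $c_{n-1}=0$ into that machinery.
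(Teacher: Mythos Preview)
Your reduction to Proposition~\ref{PrescrDecomp} does not go through, and the slip is exactly at the line ``rewrite $D=C_{q'}+\diag((p-2)+1,0,\dots,0)$ and apply Proposition~\ref{PrescrDecomp} with $a=p-2$.'' In Proposition~\ref{PrescrDecomp} the diagonal shift is $\diag(a,\dots,a,0,\dots,0)$ with $a$ \emph{itself} on the diagonal; writing $p-1=(p-2)+1$ does not let you take $a=p-2$. Your $D$ has $a=p-1=-1$, which lies outside the required range $\{1,\dots,p-2\}$, so the proposition is simply not applicable. (The same happens in your $p=3$ check: the diagonal entry is $2$, but the allowed set is $\{1\}$.)

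This is not a cosmetic issue. The whole Corollary~\ref{corPrescrDecomp} mechanism needs $-c_{n-1}=k(a+1)$ with $k\in\{1,\dots,n-1\}$ and $a\in\{1,\dots,p-2\}$; when $-c_{n-1}=0$ this forces $k(a+1)\equiv 0\pmod p$, and since $a+1\in\{2,\dots,p-1\}$ is a unit mod $p$, one needs $p\mid k$, hence $n\ge p+1$. So for $2\le n\le p$ the traceless case genuinely cannot be fed into Proposition~\ref{PrescrDecomp} at all, and this is precisely why Lemma~\ref{traceless} is proved separately in the paper rather than as a special case of the corollary.

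The paper's argument is instead a direct construction. After disposing of $n=1,2$ by hand, for $n=2k+1$ (resp.\ $n=2k+2$) one uses Lemma~\ref{use1} with a \emph{balanced} diagonal tuple summing to zero --- namely $(a,\dots,a,-a,\dots,-a,0)$ (resp.\ $(a,\dots,a,-a,\dots,-a,-a-1,1,0)$) --- to conjugate $C$ to some $D=\diag(\ldots)+C'$. One then splits $D$ explicitly as
\[
D=\begin{pmatrix}P & * \\ 0 & 0\end{pmatrix}+C_{X^n},
\]
where $P$ is diagonal with entries in $\F_p^\times$ and $C_{X^n}$ is the nilpotent companion of $X^n$. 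The first summand is block upper triangular with last row zero, and a short computation (using that its strictly upper part squares to zero and is annihilated on the right by the diagonal part) shows it is $p$-potent. This bypasses Proposition~\ref{PrescrDecomp} entirely.
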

\begin{proof}

\begin{itemize}
\item $n=1,$ C=0=0+0 is a sum of a $p$-potent and a nilpotent.
\item $n=2,$ let $$C=\left(\begin{array}{cc}
0 & -c_0 \\
1 & 0
\end{array}\right)=\left(\begin{array}{cc}
0 & 1 \\
1 & 0
\end{array}\right)+\left(\begin{array}{cc}
0 & -c_0-1 \\
0 & 0
\end{array}\right)$$ is the sum of a matrix with square the identity and a nilpotent, hence this is the sum of a $p$-potent and a nilpotent.
\item $n=2k+1,$ with $k\geq 1$ integer; let $a\neq 0$ be an integer multiple of unity of $\F.$ By Lemma \ref{use1} there exists a companion matrix $C'_{c'_0,c'_1,\ldots,c'_{n-1}}$ over $\F$ such that $C\sim D$, where $D=\diag (\underbrace{a,\dots,a }_{k \textrm{-times}} ,\underbrace{-a,\dots,-a }_{k \textrm{-times}} ,0)+C'_{c'_0,c'_1,\ldots,c'_{n-1}}.$
    For $D$ we have the following decomposition
    $$D=\left(\begin{array}{ccc}
aI_k & 0 & U \\
0 & -aI_k & V\\
0 & 0 & 0
\end{array}\right)+C"_{0,0,\dots,0},$$ where

$U=\begin{bmatrix}
   -c'_0\\
   -c'_1\\
   \vdots\\
   -c'_{k-1}
  \end{bmatrix}$
 and
 $V=\begin{bmatrix}
   -c'_k\\
   -c'_{k+1}\\
   \vdots\\
   -c'_{2k-1}
  \end{bmatrix}$
The above decomposition for $D$ is the sum of a $p$-potent and a nilpotent. Now since a matrix similar to a $p$-potent is a $p$-potent and a matrix similar to a nilpotent is a nilpotent, we find that there exists a decomposition as a sum of a $p$-potent and a nilpotent for $C,$ which is similar to $D$.
\item $n=2k+2,$ with $k\geq 1$ integer; let $a\neq 0$ be an integer multiple of unity of $\F.$ By Lemma \ref{use1} there exists a companion matrix $C'_{c'_0,c'_1,\ldots,c'_{n-1}}$ over $\F$ such that $C\sim D$, where $D=\diag (\underbrace{a,\dots,a }_{k \textrm{-times}} ,\underbrace{-a,\dots,-a }_{k-1 \textrm{-times}} ,-a-1,1,0)+C'_{c'_0,c'_1,\ldots,c'_{n-1}}.$
    For $D$ we have the following decomposition
    $$D=\left(\begin{array}{cc}
P & U \\
0 & 0
\end{array}\right)+C"_{0,0,\dots,0},$$ where

$$P=\diag (\underbrace{a,\dots,a }_{k \textrm{-times}} ,\underbrace{-a,\dots,-a }_{k-1 \textrm{-times}} ,-a-1,1),$$ and
$U=\begin{bmatrix}
    -c'_0\\
    -c'_1\\
   \vdots\\
    -c'_{2k}
  \end{bmatrix}$

The above decomposition for $D$ is the sum of a $p$-potent and a nilpotent. Now since a matrix similar to a $p$-potent is a $p$-potent and a matrix similar to a nilpotent is a nilpotent, we find that there exists a decomposition as a sum of a $p$-potent and a nilpotent for $C,$ which is similar to $D$.
\end{itemize}
\end{proof}

\begin{thm}\label{mainThm}
Let $\F$ be a field with positive odd characteristic $p.$ Let $n\geq 1$ be a positive integer.
Let $C$ be a companion matrix. Then $C$ is sum of a $p$-potent and a nilpotent if and only if the trace of $C$ is an integer multiple of unity of $\F.$
\end{thm}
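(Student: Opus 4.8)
The plan is to prove both directions of the equivalence, with the forward direction (necessity of the trace condition) being the easy one and the converse (sufficiency) requiring the structural work already assembled in the preceding results.

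For necessity, suppose $C = E + N$ with $E^p = E$ and $N$ nilpotent. Since $N$ is nilpotent, $\Tr(N) = 0$, so $\Tr(C) = \Tr(E)$. It then suffices to show that the trace of any $p$-potent matrix over $\F$ is an integer multiple of unity. The minimal polynomial of $E$ divides $X^p - X = \prod_{a \in \F_p}(X - a)$, which is a product of distinct linear factors over the prime field $\F_p \subseteq \F$; hence $E$ is diagonalizable over $\F$ with all eigenvalues lying in $\F_p$, i.e. each eigenvalue is an integer multiple of unity. The trace, being the sum of these eigenvalues (with multiplicity), is therefore an integer multiple of unity, and so is $\Tr(C)$.

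For sufficiency, assume $\Tr(C) = t \cdot 1$ for some integer $t$. Writing $C = C_{c_0, \ldots, c_{n-1}}$, we have $\Tr(C) = -c_{n-1}$, so $-c_{n-1} = t \cdot 1$. Reducing $t$ modulo $p$ we may assume $t \in \{0, 1, \ldots, p-1\}$. The idea is to split into cases according to the value of $t$ and reduce each to either Lemma~\ref{traceless} or Corollary~\ref{corPrescrDecomp}. When $t \equiv 0$, i.e. $-c_{n-1} = 0$, the matrix $C$ falls directly under Lemma~\ref{traceless} and we are done (this already covers $n = 1$, where the only companion matrix is $0$). When $t \not\equiv 0$, the goal is to write $-c_{n-1} = k(a+1)$ with $k \in \{1, \ldots, n-1\}$ and $a \in \{1, \ldots, p-2\}$, after which Corollary~\ref{corPrescrDecomp} applies. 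Since we have freedom to choose $k$ up to $n-1$ and $a+1$ ranges over $\{2, \ldots, p-1\}$, the product $k(a+1) \bmod p$ can be made to hit any nonzero residue provided $n-1 \geq 1$; for instance, if $n - 1 \geq p - 1$ one may take $a + 1 = t$ wait — more carefully, pick $a = 1$ (so $a + 1 = 2$) and solve $2k \equiv t \pmod p$ for $k \in \{1, \ldots, p-1\}$, which is possible since $2$ is invertible mod $p$; this requires $n - 1 \geq p - 1$. For small $n$ (namely $2 \leq n \leq p - 1$) one must instead choose $k$ and $a$ more cleverly so that $k(a+1) \equiv t$ with $k \leq n - 1$ and $2 \leq a + 1 \leq p - 1$; taking $k = 1$ and $a + 1 = t$ works whenever $2 \leq t \leq p - 1$, and the remaining case $t = 1$ (with $n \geq 2$) can be handled by noting $1 \equiv (p+1)/1 \cdot \ldots$ — actually $t \equiv 1$ forces us to realize $1$ as $k(a+1) \bmod p$, e.g. $k = (p+1)/2$ is too big, so instead use $k=1$, $a+1 = p+1 \equiv 1$, but $a + 1 = 1$ means $a = 0$ which is excluded; hence the case $t \equiv 1$ with small $n$ needs the explicit $n = 2$ construction in Lemma~\ref{traceless}'s style or a direct argument. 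These low-dimensional / small-trace cases are where the bookkeeping concentrates.

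The main obstacle I anticipate is precisely this case analysis on $(n, t \bmod p)$: ensuring that every companion matrix with admissible trace is covered by \emph{some} reduction, handling the boundary cases $n$ small relative to $p$ and $t \equiv 1 \pmod p$ where the constraints $k \leq n-1$ and $a \in \{1, \ldots, p-2\}$ bite simultaneously. A clean way to organize this is: first dispose of $n = 1$ and of the traceless case via Lemma~\ref{traceless}; then for $\Tr(C) = t \cdot 1 \neq 0$, use Lemma~\ref{use1} to pass to a modified companion matrix and argue directly — or, when a valid $(k, a)$ decomposition of $-c_{n-1}$ exists, invoke Corollary~\ref{corPrescrDecomp} — while treating the finitely many exceptional small-$n$ cases by hand with block decompositions analogous to those in Lemma~\ref{traceless}. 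Once the reduction is set up, each individual case is routine given Proposition~\ref{PrescrDecomp} and Lemma~\ref{traceless}.
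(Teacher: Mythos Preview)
Your overall architecture matches the paper's: the necessity argument via eigenvalues of $E$ lying in $\F_p$ is exactly what the paper does, and for sufficiency you correctly split into the traceless case (Lemma~\ref{traceless}) and the case $-c_{n-1}\neq 0$ to be fed into Corollary~\ref{corPrescrDecomp}.

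Where you diverge from the paper is in the search for $(k,a)$ with $-c_{n-1}=k(a+1)$, and this is where your write-up stalls into an unfinished case analysis. The paper does not try specific values of $a$ or of $k$ and then patch the leftovers; instead it picks $k$ first, subject only to $k\in\{1,\dots,n-1\}$, $k\not\equiv 0\pmod p$, and $k\not\equiv -c_{n-1}\pmod p$, and then \emph{defines} $a:=k^{-1}(-c_{n-1})-1$. The two constraints on $k$ are exactly what force $a\in\{1,\dots,p-2\}$: from $-c_{n-1}\neq 0$ one gets $a\not\equiv -1$, and from $k\neq -c_{n-1}$ one gets $a\neq 0$. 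This single observation replaces your entire discussion of ``pick $a=1$ and solve for $k$'', ``pick $k=1$ and solve for $a$'', the worry about $t\equiv 1$, and the proposed hand-built block decompositions for small $n$.

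So the genuine gap in your proposal is not a wrong idea but a missing one: you never hit on solving for $a$ in terms of a freely chosen $k$, which is what collapses the case analysis. Once you have that, the only residual issue is the existence of such a $k$ when $n-1$ is very small; for $n\ge 3$ the set $\{1,\dots,n-1\}$ contains at least two residues not divisible by $p$, so one of them avoids $-c_{n-1}$, while for $n=2$ with $-c_{1}\equiv 1$ one can simply write $C=\begin{pmatrix}0&0\\1&1\end{pmatrix}+\begin{pmatrix}0&-c_0\\0&0\end{pmatrix}$ directly. The paper's proof is terse on this last point, but your instinct that a tiny boundary case needs separate treatment is correct.
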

\begin{proof}
Let $C=C_{c_0,c_1,\ldots,c_{n-1}}$ be a companion matrix.

Let n=1 and $-c_0\in \F$. If $-c_0$ is an integer multiple of unity then there exist the $p$-potent $-c_0$ and the nilpotent $0\in F_p$ such that $-c_0=-c_0+0$. Conversely if $-c_0=e+n$, $e,n\in \F$, $e^p=e$ and $n$ is a nilpotent then $n=0$ and $-c_0=e.$ Therefore $-c_0$ verifies the equation $X^p=X$ which has the $p$ solutions all $p$ integer multiples of unity of $\F.$ Therefore $-c_0$ is an integer multiple of unity of $\F.$

Now let $n\geq 2.$

 Let $E$ be $p$-potent and $N$ be nilpotent such that $C=E+N,$ then since the trace of a nilpotent is $0$ then $\mbox{trace}(C)=\mbox{trace}(E),$ that is $-c_{n-1}$ is the sum of the eigenvalues of $E$. Let $m_E$ be the minimal polynomial of $E$. Then $m_E$ divides $X^p-X,$ and therefore if $\lambda$ is an eigenvalue of $E,$ then $\lambda^p=\lambda.$ Hence $\lambda$ is an integer multiple of unity and the sum of the eigenvalues of $E$ is an integer multiple of unity. Therefore $-c_{n-1}$ is an integer multiple of unity of $\F.$

Conversely let $-c_{n-1}$ be an integer multiple of unity of $\F.$

Let $-c_{n-1}=0$. Then by Lemma \ref{traceless} we have that $C$ is sum of a $p$-potent and a nilpotent.

Let $-c_{n-1}\neq 0,$ then we choose $k\in \{1,2,\ldots,n-1\},$ $k\neq 0\cdot 1,$ and $k\neq -c_{n-1}$, and $a=k^{-1}(-c_{n-1})-1\in \{1,2,\ldots,p-2\},$  then $-c_{n-1}=k(a+1)$. Hence by Corollary \ref{corPrescrDecomp} we have that $C$ is sum of a $p$-potent and a nilpotent.

In conclusion $C$ is sum of a $p$-potent matrix and a nilpotent matrix.
\end{proof}

\begin{thm}\label{mppot}
Let $\F$ be a field with positive odd characteristic $p.$ Let $n\geq 1,$ and $m\geq 2$ be positive integers. Let $C$ be a companion matrix. Then $C$ is sum of $m$ $p$-potents and a nilpotent if and only if the trace of $C$ is an integer multiple of unity of $\F.$
\end{thm}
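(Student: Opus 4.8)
The plan is to obtain Theorem~\ref{mppot} as an essentially immediate consequence of Theorem~\ref{mainThm}, using only the trivial observation that the zero matrix is $p$-potent (since $0^p=0$). So the two halves of the proof reduce to, on one side, the trace computation already carried out for $m=1$, and on the other side, padding a known decomposition with zeros.

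For the necessity direction I would argue exactly as in the proof of Theorem~\ref{mainThm}. Suppose $C=E_1+\dots+E_m+N$ with $E_i^p=E_i$ for each $i$ and $N$ nilpotent. The trace of a nilpotent matrix is $0$, so $\mathrm{trace}(C)=\sum_{i=1}^m\mathrm{trace}(E_i)$. For a fixed $i$, the minimal polynomial of $E_i$ divides $X^p-X=\prod_{j=0}^{p-1}(X-j\cdot 1)$; hence every eigenvalue $\lambda$ of $E_i$ satisfies $\lambda^p=\lambda$, and since $X^p-X$ already splits into distinct linear factors over the prime field $\F_p\subseteq\F$, each such $\lambda$ is an integer multiple of unity. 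Consequently $\mathrm{trace}(E_i)$, being the sum of the eigenvalues of $E_i$, is an integer multiple of unity, and therefore so is $\mathrm{trace}(C)$.

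For the sufficiency direction, assume $\mathrm{trace}(C)$ is an integer multiple of unity. Theorem~\ref{mainThm} then supplies a $p$-potent $E$ and a nilpotent $N$ with $C=E+N$. Since $m\geq 2$, I would simply write $C=E+0+\dots+0+N$ with $m-1$ copies of the zero matrix inserted; as the zero matrix is $p$-potent, this exhibits $C$ as a sum of $m$ $p$-potents and a nilpotent, which completes the proof.

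I do not expect a genuine obstacle here, since the statement is a corollary of results already in hand. The only point that is not purely formal is the claim, used in the necessity direction, that every eigenvalue of a $p$-potent matrix is an integer multiple of unity, and this is settled exactly as in Theorem~\ref{mainThm} by using that $X^p-X$ splits completely over $\F_p$.
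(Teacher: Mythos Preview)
Your proof is correct. The necessity direction matches the paper's argument exactly, and your sufficiency argument is valid: the zero matrix is indeed $p$-potent, so a decomposition $C=E+N$ from Theorem~\ref{mainThm} can be padded to $C=E+0+\cdots+0+N$ with $m-1$ zero summands.

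However, the paper does \emph{not} take this shortcut in the sufficiency direction. Instead, it invokes Lemma~\ref{use1} to write $C\sim C'+\diag(a_1,\dots,a_k,0,\dots,0)$ for chosen integer multiples $a_i$ of unity; the diagonal part is then a genuine (generally nonzero) $p$-potent, and $C'$ is a companion matrix whose trace is again an integer multiple of unity, so Theorem~\ref{mainThm} applies to $C'$. This yields $C$ as a sum of two $p$-potents and a nilpotent, and an induction on $m$ handles the general case. Your zero-padding argument is strictly simpler and more direct; the paper's route, while correct, produces decompositions in which the extra $p$-potent summands are typically nonzero, at the cost of appealing to Lemma~\ref{use1} and running an induction that your approach avoids entirely.
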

\begin{proof}
Let $C$ be a companion matrix which has a decomposition $C=E_1+\dots+E_m+N,$ with all $E_i$ being $p$-potents, and $N$ being nilpotent.
It follows that $$\Tr(C)=\Tr(E_1)+\dots+\Tr(E_m)$$
Every eigenvalue of every $E_i,$ verifies the equation, $X^p=X,$ therefore is an integer multiple of unity of $\F$, and so are the traces of all $E_i,$ and $\Tr(C).$

Assume now that the trace of the companion matrix $C$ is an integer multiple of unity of $\F.$ Fix integer multiples of unity of $\F$,$a_1,a_2,\dots,a_k.$ By Lemma \ref{use1} we have that there exists a companion $C'$ such that
$$C\sim C'+\diag (a_1,a_2,\dots,a_k,0,\dots,0).$$
Then the trace of $C'$ is an integer multiple of unity of $\F$, and therefore, by Theorem \ref{mainThm}, $C'$ is sum of a $p$-potent and of a nilpotent.

 Now since a matrix similar to a $p$-potent is $p$-potent, and a matrix similar to a nilpotent is nilpotent, we derive that $C$ is sum of two $p$-potents and a nilpotent. Using the lemma we used before and the induction hypothesis that a companion matrix with trace an integer multiple of unity is sum of $m\geq 2,$ $p$-potents and a nilpotent, we derive that $C$ is sum of $m+1$ $p$-potents and a nilpotent, thus proving $C$ is sum of $m,$ $p$-potents and a nilpotent, for every $m\geq 2.$
\end{proof}

By Theorems \ref{mainThm} and \ref{mppot} we derive the following result:

\begin{cor}
Let $\F$ be a field with positive odd characteristic $p.$ Let $n\geq 1,$ and $m\geq 2$ be positive integers.
Let $C$ be a companion matrix. Then $C$ is sum of $m$ $p$-potents and a nilpotent if and only if $C$ is sum of a $p$-potent and a nilpotent.
\end{cor}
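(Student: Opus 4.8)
The plan is to deduce this immediately from the two preceding theorems, both of which reduce the relevant decomposition property to a single arithmetic condition on the trace. Concretely, by \thmref{mainThm} the companion matrix $C$ is the sum of a $p$-potent and a nilpotent if and only if $\Tr(C)$ is an integer multiple of the unity of $\F$; and by \thmref{mppot}, for the fixed integer $m\geq 2$, $C$ is the sum of $m$ $p$-potents and a nilpotent if and only if $\Tr(C)$ is an integer multiple of the unity of $\F$. Chaining these two biconditionals through the common middle condition yields the stated equivalence, with no further work.

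Thus I would record the argument in essentially two lines: first invoke \thmref{mainThm} to rewrite the right-hand side of the claimed equivalence as the statement that $\Tr(C)$ is an integer multiple of the unity of $\F$, then invoke \thmref{mppot} to rewrite the left-hand side in exactly the same way, and conclude that the two sides are equivalent. It is worth noting that this route is in a sense forced: a sum of two $p$-potents need not be $p$-potent, so one cannot hope to collapse an $m$-term decomposition into a single-term one — or conversely expand one into $m$ — by a naive algebraic manipulation of the summands themselves. The trace invariant is precisely the bridge that makes the two notions coincide on companion matrices.

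Since the result is a pure logical consequence of facts already established, there is no genuine obstacle; the only point requiring care is that the hypotheses match. Both cited theorems apply to an arbitrary companion matrix over a field of odd characteristic $p$ with $n\geq 1$, and \thmref{mppot} already carries the assumption $m\geq 2$, so the corollary's hypotheses ($n\geq 1$ and $m\geq 2$) are exactly what is needed to invoke both theorems and complete the proof.
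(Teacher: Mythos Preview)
Your proposal is correct and matches the paper's approach exactly: the corollary is stated with no separate proof, only the remark that it follows from \thmref{mainThm} and \thmref{mppot}, which is precisely the chaining of biconditionals through the trace condition that you describe. Your additional commentary about why a direct manipulation of the summands cannot work is a nice clarification but goes beyond what the paper records.
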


\begin{rem}
By Remark $9$ from \cite{BM} it is known that if all companion matrices which appear in the Frobenius normal form of a matrix $A$ are sums of idempotent and nilpotent matrices, then $A$ is sum of an idempotent and a nilpotent, and it is not known if the converse is true. Comparatively, the situation in case of decompositions as matrices over a field of characteristic $p>2$ as sums of $p$-potent and nilpotent matrices is in the following way: it is not hard to see that if all companion matrices which appear in the Frobenius normal form of a matrix $A$ are sums of $p$-potent and nilpotent matrices, then $A$ is sum of a $p$-potent matrix and a nilpotent matrix. The converse is true in case $\F=\F_p,$ since all companion matrices over this field are sums of $p$-potent and nilpotent matrices. It would be nice to know if the converse is true for any other field.
\end{rem}

\end{document}